\makeatletter \@addtoreset{equation}{section} \makeatother
\renewcommand\thetable{\thesection.\@arabic\c@table}
\theoremstyle{plain}
\newtheorem{maintheorem}{Theorem}
\newtheorem{maincorollary}{Corollary}
\newtheorem{lemma}{Lemma}[section]
\newtheorem{remark}{Remark}[section]
\newcommand{\al} {\alpha}
\newcommand{\vep}{\varepsilon}
\begin{document}

\title{(Semi)continuity of the entropy of Sinai probability measures for partially hyperbolic diffeomorphisms}

\author{M. Carvalho}
\address{Departamento de Matem\'atica, Faculdade de Ci\^encias da Universidade do Porto\\
Rua do Campo Alegre, 687, 4069-007 Porto\\
Portugal}
\email{mpcarval@fc.up.pt}

\author{P. Varandas}
\address{Departamento de Matem\'atica, Universidade Federal da Bahia\\
 Av. Ademar de Barros s/n, 40170-110 Salvador, Brazil. \& CMUP, University of Porto, Portugal}
\email{paulo.varandas@ufba.br}
\urladdr{www.pgmat.ufba.br/varandas}

\date{\today}
\thanks{M. Carvalho has been funded by the European Regional Development Fund through the program COMPETE and by the Portuguese Government through the FCT - Funda{\c c}{\~a}o para a Ci{\^e}ncia e a Tecnologia under the project PEst-C/MAT/UI0144/2013. P. Varandas is partially supported by a CNPq-Brazil postdoctoral fellowship at University of Porto, whose research conditions are greatly acknowledged.}

\begin{abstract}
We establish sufficient conditions for the upper semicontinuity and the continuity of the entropy of Sinai probability measures invariant by partially hyperbolic diffeomorphisms and discuss their application in several examples.
\end{abstract}

\keywords{Dominated splitting; Partial hyperbolicity; Sinai probability measure; Entropy}

\maketitle

\section{Introduction }

Given a diffeomorphism $f:M \rightarrow M$ of a smooth compact Riemannian manifold $M$, a Borelian $f$-invariant probability measure $\mu_f$ is said to be a \emph{Sinai probability measure} (as proposed in \cite[page 205]{LS83}) if its Oseledets decomposition defines unstable Pesin sub-manifolds almost everywhere along which the measure is absolutely continuous with res\-pect to Lebesgue. The probability $\mu_f$ is called \emph{Sinai-Ruelle-Bowen measure} (SRB for short) if its basin of attraction (that is, the set of points $x \in M$ such that the averages of Dirac measures along the orbit of $x$ converge to $\mu_f$ in the weak* topology) is a positive Lebesgue measure subset of the whole manifold.

The study of the continuity properties of the measure theoretical entropy goes back to Newhouse~\cite{N89} and Yomdin~\cite{Ym87}, and it is known for SRB probability measures of $C^2$ uniformly hyperbolic diffeomorphisms due to the fact that they are equilibrium states of well behaved potentials (details in \cite{Bo75}). The continuity of the SRB entropy for families of endomorphisms admitting singular points was studied in \cite{AOT06} by constructing induced maps and exploring the connection between the entropy of the SRB measure of the initial system and that of the corresponding measure of the induced system. Among conservative diffeomorphisms, the $C^1$-generic continuity of the metric entropy function has been obtained by the contribution of many authors, among which we cite \cite{BV11, ST12, YZ15}. In this setting, the continuity of the metric entropy relies on the $C^1$-generic continuity of each Lyapunov exponents and the $C^1$-generic validity of Pesin's entropy formula.

On the contrary, within the dissipative non-uniformly hyperbolic setting there is no natural reference invariant measure and the previously mentioned generic properties do not hold, or are yet to be known. For instance, generic measures of $C^1$-generic diffeomorphisms do not satisfy the Pesin's entropy formula (see~\cite{AbBnCr11}). In addition, it has been conjectured by J. Palis that in the complement of uniform hyperbolicity (a $C^1$-open property) every diffeomorphism may be approximated by another exhibiting either a homoclinic tangency or a heteroclinic cycle, a phenomenon whose generic unfolding may generate infinitely many sinks or sources. Thus, although the existence of a Sinai probability measure has been conjectured to hold generically, its uniqueness may be, in some sense, a rare event. For all these reasons, we will analyze the continuous dependence of the Sinai probability measure and its entropy in cases where it happens to exist and to be unique.

After L.S. Young pioneering work \cite{LSY98}, the existence of Markov towers for dynamical systems has become a suitable machinery to deduce fine statistical properties of invariant measures; see, for instance \cite{AOT06}. The construction of Markov towers is a hard subject, mostly understood for non-uniformly hyperbolic dynamical systems (the ones without zero Lyapunov exponents). Proposing a complementary approach to attain continuity of the metric entropy map with respect to the dynamics, we will demand the existence of a dominated splitting that is adapted to the Sinai probability measure, although no request is made concerning the absence of zero Lyapunov exponents.

\section{Setting}

Consider $r \in [1,+\infty[$ and let $\text{\text{Diff}}^{\,r}(M)$ denote the set of $C^{\,r}$ diffeomorphisms, endowed with the $C^r$ topology, of a smooth compact Riemannian manifold $M$ to itself.

\subsection{Dominated splittings} 
Given $f \in \text{Diff}^{\,1}(M)$ and a compact $f$-invariant set $\Lambda\subset M$, we say that $\Lambda$ admits a \emph{dominated splitting} if there exists a $Df$-invariant decomposition $T_\Lambda M= E \oplus F$ and constants $C>0$ and $\varrho \in (0,1)$ such that
$$\forall n \in \mathbb{N} \quad \| Df^n(x) \mid_{E_x} \| \,.\, \| (Df^n(x) \mid_{F_x})^{-1} \|^{-1} \le C\varrho^n.$$
Recall that a dominated splitting (and, in particular, the dimension of its spaces) persists under small $C^1$ perturbations and varies continuously with the base point and the dynamics; details in \cite{BDV05}.

\subsection{Partial hyperbolicity} A set $\Lambda$ is said to be \emph{strongly partially hyperbolic} if the tangent bundle over $\Lambda$ splits as a direct sum of continuous sub-bundles $E^u\oplus E^c \oplus E^s$, invariant under the derivative $Df$ and such that:
\begin{itemize}
\item at least two of the three sub-bundles are nontrivial (that is, both have positive dimension);
\item $Df_{|E^u}$ is uniformly expanding and $Df_{|E^s}$ is uniformly contracting;
\item $Df_{|E^c}$ is never as expanding as $Df_{|E^u}$ nor as contracting as $Df_{|E^s}$ (although its behavior may vary from point to point); that is, the $Df$-invariant decompositions $E^u \oplus E^c$ and $E^c \oplus E^s$ are dominated.
\end{itemize}
$\Lambda$ is hyperbolic precisely when the central bundle $E^c$ may be taken trivial. The set $\Lambda$ is said to be \emph{partially hyperbolic} if the tangent bundle over $\Lambda$ has a dominated splitting $E\oplus F$, invariant under the derivative $Df$ and such that either $E$ or $F$ is hyperbolic (that is, uniformly expanding or uniformly contracting, respectively). The diffeomorphism $f$ is called partially hyperbolic if $M$ itself is a partially hyperbolic set.

\subsection{Robust transitivity} A diffeomorphism $f$ is \emph{transitive} if there exists a point $x \in M$ whose forward orbit $\{f^n(x): n \in \mathbb{N}\}$ is dense in $M$. It is \emph{$C^1$-robustly transitive} if there exists a $C^1$ neighborhood $U$ of $f$ such that each $g \in U$ is transitive. In \cite{M78}, R. Ma\~n\'e proved that, in dimension $2$, $C^1$-robust transitivity implies hyperbolicity (so Anosov diffeomorphisms are the only robustly transitive ones) and, moreover, that this implication no longer holds in higher dimensions. Yet, in \cite{DPU99} it has been proved that, in dimension $3$, every robustly transitive diffeomorphism is partially hyperbolic. Although this characterization do not carry over to dimensions bigger or equal to $4$ (more information may be found in \cite{BV00}), it was proved in \cite{BDP03} that a $C^1$ robustly transitive diffeomorphism always admits a dominated splitting.

\subsection{Sinai probability measures}\label{Sinai} Among $C^{1+\alpha}$ uniformly hyperbolic diffeomorphisms, the property that characterizes a SRB measure is linked to the absolute continuity of the measure with respect to Lebesgue along unstable manifolds; see \cite{Bo75}. This last property is a necessary and sufficient condition, as proved in \cite{LY85}, for the metric entropy of the Sinai probability measure, say $\mu_f$, to be computed as the integral of the positive Lyapunov exponents, that is,
\begin{equation}\label{eq:Pesinformula}
h_{\mu_f}(f)
	= \int \big(\sum_{\lambda_i(x,f) >0} \lambda_i(x,f) \big)\, d\mu_f
\end{equation}
where $\lambda_1 (x,f)\geq \lambda_2 (x,f)\geq \cdots \geq \lambda_{\text{dim }M} (x,f)$ denote all Lyapunov characteristic exponents of $\mu_f$ at $x$. We will refer to $\mu_f$ as a \emph{Sinai probability measure} whenever $\mu_f$ is absolute continuous with respect to Lebesgue along unstable manifolds associated to its Oseledets-Pesin decomposition.

Given a $C^{1+\alpha}$ diffeomorphism $f$, an open $\mathcal{W} \subseteq M$ and a transitive hyperbolic attractor $\Lambda_f:=\bigcap_{n\ge 0} f^n(\mathcal W)$ of $f$, the entropy of the natural Sinai probability measure $\mu_f$ for $f_{\mid_{\Lambda_f}}$ is well known to exist and vary continuously with $f$ since it is the equilibrium state for $f$ with respect to the potential
$$x \in \Lambda_f \mapsto -\log |\det Df(x)\mid_{E^u_x}|.$$
For $C^1$-diffeomorphisms Sinai probability measures may not exist even in the uniformly hyperbolic setting, as hinted in \cite{AB06, AB07}.

\section{Main results}

As proved in \cite{AbBnCr11}, generic (with respect to the weak* topology) ergodic probability measures invariant by $C^1$-generic diffeomorphisms are hyperbolic (that is, they exhibit no zero Lyapunov exponents) and have zero metric entropy. In particular, this property implies that $C^1$-generically the generic measures do not satisfy Pesin's formula, unless all their Lyapunov exponents are negative. Moreover, recent contributions \cite{AB06, AB07} suggest that, in the dissipative setting, $C^1$ generically the existent SRB probability measures may not be absolutely continuous with respect to Lebesgue measure along unstable Pesin foliations, even in the uniformly hyperbolic context. Additionally, Pesin's entropy formula may fail in the $C^1$-topology. Accordingly, we will restrict our study to the set of $C^{r}$ diffeomorphisms ($r \geq 1$) with a unique Sinai probability measure.

\subsection{Upper semicontinuity} We will start showing that if for a family of $C^1$ diffeomorphisms Sinai probability measures exist, are unique and vary continuously, then the corresponding measure theoretical entropy map is upper semicontinuous.

\begin{maintheorem}\label{thm:A}
Let $\mathcal U \subset \text{\text{Diff}}^{\,\,r}(M)$, $r\geq 1$, be a Baire set of $C^{r}$ diffeomorphisms of $M$ such that, for every $f \in \mathcal U$, there exists a unique Sinai probability measure $\mu_f$ which varies continuously with $f$. Then the entropy map $h_{S}: \mathcal U \to \mathbb R_0^+$ given by
\begin{equation}\label{eq:entropy}
f \in \mathcal U \quad \mapsto  \quad h_{\mu_f}(f)
\end{equation}
is upper semicontinuous. In particular, there exists a $C^r$-residual subset $\mathcal R \subset \mathcal U$ of continuity points of the function $h_{S}$.
\end{maintheorem}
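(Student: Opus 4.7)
The plan is to exploit Pesin's entropy formula—valid for the Sinai measures $\mu_f$ by hypothesis, as recalled in \eqref{eq:Pesinformula}—to express $h_{\mu_f}(f)$ as an integral and then realize that integral as a countable infimum of continuous functions of $f$. Upper semicontinuity of $h_S$ will then follow because infima of continuous functions are upper semicontinuous, and the assertion about a residual set of continuity points will be a standard Baire category consequence.

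Concretely, I would introduce
$$\chi^+(x,f) \;:=\; \sum_{i \,:\, \lambda_i(x,f) > 0} \lambda_i(x,f)
\qquad \text{and} \qquad
\Phi_n(f,x) \;:=\; \max_{0 \le k \le \dim M}\,\log\|\wedge^k Df^n(x)\|,$$
and record three properties of the sequence $(\Phi_n)_n$. First, $\Phi_n$ is jointly continuous in $(f,x) \in \text{Diff}^{\,r}(M) \times M$, since $Df^n(x)$ depends continuously on both arguments. Second, submultiplicativity of operator norms under composition and of exterior powers yields the subadditive cocycle inequality
$$\Phi_{m+n}(f,x) \;\le\; \Phi_m(f,x) + \Phi_n(f,\, f^m x).$$
Third, Oseledets' theorem identifies the pointwise limit: for $\mu_f$-almost every $x$,
$$\lim_{n\to\infty}\,\frac{1}{n}\,\Phi_n(f,x) \;=\; \max_k\, \sum_{i=1}^k \lambda_i(x,f) \;=\; \chi^+(x,f),$$
where the final equality is because the Lyapunov spectrum is decreasing, so $\sum_{i=1}^k \lambda_i$ is maximized at the index of the last positive exponent.

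Combining these ingredients with Kingman's subadditive ergodic theorem for the $f$-invariant probability $\mu_f$ yields
$$h_{\mu_f}(f) \;=\; \int \chi^+(x,f)\, d\mu_f \;=\; \inf_{n \ge 1}\,\frac{1}{n}\int \Phi_n(f,x)\, d\mu_f(x),$$
where the first equality is Pesin's formula. For each fixed $n$, the map $H_n : f \mapsto \tfrac{1}{n}\int \Phi_n(f,\cdot)\, d\mu_f$ is continuous on $\mathcal{U}$: if $f_k \to f$ in the $C^r$ topology, then $\Phi_n(f_k,\cdot) \to \Phi_n(f,\cdot)$ uniformly on the compact manifold $M$, and the hypothesis $\mu_{f_k} \to \mu_f$ in the weak$^*$ topology allows passing to the limit in the integral. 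Hence $h_S = \inf_{n \ge 1} H_n$ is a countable infimum of continuous functions and therefore upper semicontinuous on $\mathcal{U}$. Since $\mathcal{U}$ is a Baire space, the classical fact that any upper semicontinuous real-valued function on a Baire space admits a dense $G_\delta$ set of continuity points produces the $C^r$-residual subset $\mathcal{R} \subset \mathcal{U}$ on which $h_S$ is continuous.

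The step that I expect to require the most care is securing Pesin's entropy formula at the low-regularity end $r=1$ of the allowed range: in the $C^1$ setting absolute continuity along unstable Pesin leaves does not automatically imply the entropy formula in the classical Ledrappier--Young sense, so some additional input (or tacit higher regularity) may be needed. Once the integral representation $h_{\mu_f}(f) = \int \chi^+\, d\mu_f$ is in hand, the remainder of the argument—subadditivity, joint continuity of derivatives, weak$^*$ continuity of $f \mapsto \mu_f$, and the Baire category argument—is robust and essentially formal.
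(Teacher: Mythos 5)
Your proof is correct and follows essentially the same route as the paper: both express $h_{\mu_f}(f)$ via the Ledrappier--Strelcyn/Pesin formula as $\inf_{n\ge 1}$ of functionals that are jointly continuous in $f$ (you use $\max_{k}\log\|\wedge^k Df^n\|$ where the paper uses $\log\big(1+\sum_{k}\|\wedge^k Df^n\|\big)$, an immaterial difference), exploit subadditivity, and conclude by upper semicontinuity of infima of continuous functions together with the Baire category theorem. The regularity caveat you raise at $r=1$ is legitimate but applies equally to the paper's own appeal to Ledrappier--Strelcyn, so it is not a gap relative to the published argument.
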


\begin{proof}[Proof of Theorem~\ref{thm:A}]

It follows from \cite{LS83} that, as $\mu_f$ is a Sinai probability measure, then
\begin{equation}\label{LedSt}
h_{\mu_f} (f) = \lim_{n\to\infty}  \frac1n  \int \log \| Df^n(x)^{\wedge}\| \, d\mu_f
\end{equation}
where
$$\| Df^n(x)^{\wedge}\|:=1+\sum_{j=1}^{\dim M} \| Df^n(x)^{\wedge j}\|$$
and $Df^n(x)^{\wedge j}$ denotes the $j$th exterior power of the linear map $Df^n(x)$. Moreover, it is clear that the map
$$ f \in \mathcal U \quad \mapsto \quad \frac1n  \int \log \| Df^n(x)^{\wedge}\| \, d\mu_f $$
is continuous. Besides, it is not hard to check that, given $f$, the sequence of real functions $(\log \| Df^n(x)^{\wedge}\|)_{n \in \mathbb{N}}$ is sub-additive. Indeed, for every $m,n\ge 1$ and $x\in M$,
\begin{align*}
\log \| Df^{n+m}(x)^{\wedge}\|
	& \le \log \Big( 1+\!\!\sum_{j=1}^{\dim M} \! \| Df^n(x)^{\wedge j}\| \, \| Df^m(f^n(x))^{\wedge j}\| \Big) \\
	& \le \log
		\Big[
		\Big( 1+ \!\!\sum_{j=1}^{\dim M} \! \| Df^n(x)^{\wedge j}\|  \Big)
		\Big( 1+ \!\!\sum_{j=1}^{\dim M} \! \| Df^m(f^n(x))^{\wedge j}\|  \Big)
		\Big] \\
	& = \log \| Df^{n}(x)^{\wedge}\| + \log \| Df^{m}(f^n(x))^{\wedge}\|.
\end{align*}
Therefore, the measure theoretical entropy of $\mu_f$ is given by
$$h_{\mu_f} (f) = \inf_{n \ge 1}  \frac1n  \int \log \| Df^n(x)^{\wedge}\| \, d\mu_f $$
and so it is the greatest lower bound of a sequence of real continuous functions on $\mathcal U$. Therefore, the map
$$f \in \mathcal U \quad \mapsto \quad  h_{\mu_f}(f)$$
is upper semicontinuous, which proves the first assertion in the theorem. The second assertion is a direct consequence of the first one
by general topology arguments.
\end{proof}

Some comments are in order. Firstly, observe that the uniqueness assumption in the theorem was not strictly used. In particular, the upper semicontinuity of the entropy map $f \mapsto h_{\mu_f}(f)$ holds for any given family
of Sinai probability measures that varies continuously with the dynamics in the weak* topology. A second comment concerns the differentiability assumption on the theorem. As stated, given a family of $C^r$ diffeomorphism parameterized by some Baire space, the theorem yields a $C^r$-generic set of points of continuity for the metric entropy map. The set of continuity points is clearly $C^r$-dense, hence $C^1$-dense as well.

\begin{remark}\label{rmk:uniformity}
\emph{Equation \eqref{LedSt}, which is the key characterization used in the previous argument, has been established in \cite{LS83} also for $C^{1+\alpha}$ endomorphisms $f$ with a singular
set $\mathcal S_f$ satisfying the following conditions:
\begin{itemize}
\item[(LS1)] If $B_\vep (\mathcal S_f)$ stands for the $\vep$-ball around $\mathcal S_f$, there are uniform constants $C_{f,1}$ and $\beta>0$ such that, for every small enough $\vep>0$,
$$\mu_f(B_\vep (\mathcal S_f)) \le C_{f,1}\,\, \vep^\beta.$$
\item[(LS2)] There exists $C_{f,2}>0$ such that
$$\int \log^+ \| Df(x)^\pm\| \, d\mu_f \le C_{f,2} < \infty$$
where $\log^+ (t)=\max \{\log(t),0 \}$.
\end{itemize}
Therefore, Theorem~\ref{thm:A} is valid in the more general context of $C^{1+\alpha}$ local diffeomorphisms or $C^{1+\alpha}$ endomorphisms with singular sets $\mathcal S_f$ satisfying the conditions (LS1) and (LS2) and for which\\
\begin{itemize}
\item[(LS3)] $\forall\,\, n \in \mathbb{N}, \quad f \mapsto \int \log \| Df^n(x)^{\wedge}\| \, d\mu_f \quad \text{is continuous}.$
\end{itemize}
\medskip
We shall discuss this setting later, in Section~\ref{examples}.}
\end{remark}

\subsection{Conservative case revisited}\label{subsec:conserv}

Let $M$ be a smooth compact Riemannian manifold, $m$ be a normalized Lebesgue measure on $M$ and $\text{Diff}^1_m(M)$ be the set of $C^1$ volume-preserving diffeomorphisms of $M$ endowed with the $C^1$ topology. In this setting, we may gather signi\-ficant information concerning dominated splittings, the metric entropy of $m$, its Lyapunov spectrum and the validity of Pesin's entropy formula.

As proved in  \cite{ST12} and \cite{T02}, Pesin's entropy formula holds $C^1$ generically among volume-preserving diffeomorphisms. Moreover, according to \cite{BV05}, there is a residual subset of $\text{Diff}^1_m(M)$ such that, for any $f$ in that set and $m$-almost every point $x$, either all Lyapunov exponents at $x$ are zero or the Oseledets splitting of $f$ is dominated on the orbit of $x$. In particular, see \cite{B02}, for any compact surface $M$ there is a residual set of area preserving diffeomorphisms which are either Anosov or have zero Lyapunov exponents $m$ almost everywhere. The proofs of these results indicate that, in the conservative setting, the entropy map $f \in \text{Diff}^1_m(M) \rightarrow h_m(f)$ is not continuous. Nevertheless, it has been established recently in \cite{AB12} that the Lyapunov spectrum is $C^1$-generically continuous; together with the $C^1$-generic validity of Pesin's entropy formula, this implies that there is a residual subset of $\text{Diff}^1_m(M)$ of continuity points of the entropy map; see \cite{YZ15}.

From these results and \cite{BV05} it is now easy to deduce, after intersecting the corresponding residual sets, that, if $f \in \text{Diff}^{\,1}_m(M)$ is a generic continuity point of the entropy map, then:
\begin{itemize}
\item[(a)] If $M$ is the $2$-torus, then $f$ is either Anosov or $h_{m}(f)=0$.
\item[(b)] If $M$ is a compact surface other than the $2$-torus, then $h_{m}(f)=0$.
\item[(c)] If $\text{dim }M > 2$, then either $f$ has a dominated splitting or $h_{m}(f)=0$.
\end{itemize}

We recall that the set $\text{Diff}^{\,\,1}_m(M)$ is closed but has empty interior in $\text{Diff}^{\,1}(M)$. Moreover, the probability $m$, although preserved by all elements in $\text{Diff}^1_m(M)$, may not be a Sinai probability measure. Still, according to \cite{AbBnCr11}, inside the family of robustly ergodic conservative diffeomorphisms there is an open dense subset $\mathcal V$ where $m$ is hyperbolic with respect to any diffeomorphism in $\mathcal V$ (that is, its Lyapunov exponents are all non-zero) and its Oseledets decomposition is dominated. Additionally, by \cite{BFP06}, if $f$ belongs to $\text{Diff}_m^{1+\alpha}(M)$ and is $C^1$ robustly ergodic (that is, $m$ is ergodic for $f$ and for any $C^1$ near-by element of $\text{Diff}_m^{1+\alpha}(M)$), then $f$ may be $C^1$ approximated by $g \in \text{Diff}_m^{1+\alpha}(M)$ for which $m$ is hyperbolic. These results motivate the following application of Theorem~\ref{thm:A} to $C^r$ conservative diffeomorphisms.

\begin{maincorollary}
Assume that $\mathcal U \subset \text{Diff}^{\,\,r}_m(M)$, $r \geq 1$, is a Baire space and that $m$ is a Sinai probability measure for any $f\in \mathcal U$. Then the entropy function
$$h_m: f \in \mathcal U \mapsto h_m(f)$$
is upper semicontinuous. Consequently, there exists a $C^r$ residual subset $\mathcal R\subset \mathcal U$ of continuity points of $h_m$.
\end{maincorollary}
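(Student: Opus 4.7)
The plan is to reduce the corollary to the proof technique of Theorem~\ref{thm:A}, exploiting the decisive simplification that, in the conservative setting, the reference measure $m$ is the \emph{same} probability for every $f \in \mathcal U$. This is precisely the scenario anticipated by the first remark following Theorem~\ref{thm:A}: the map $f \mapsto \mu_f = m$ is constant, hence trivially continuous in the weak$^*$ topology, and the uniqueness hypothesis of Theorem~\ref{thm:A} is not needed for the entropy argument to go through.

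First I would verify that the characterization \eqref{LedSt} applies. Since $m$ is Sinai for every $f \in \mathcal U$ by assumption, \cite{LS83} yields
\[
h_m(f) = \lim_{n\to\infty} \frac1n \int \log \| Df^n(x)^{\wedge} \|\, dm
\]
for each $f \in \mathcal U$. Next I would observe that, for each fixed $n$, the integrand $\log \| Df^n(x)^{\wedge} \|$ depends continuously on $f$ in the $C^r$ topology (with $r \geq 1$) uniformly in $x \in M$, simply because $Df^n$ does; since $m$ does not move with $f$, the function $f \mapsto \frac1n \int \log \| Df^n(x)^{\wedge} \|\, dm$ is continuous on $\mathcal U$. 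Invoking the sub-additivity estimate established in the proof of Theorem~\ref{thm:A}, which depends only on $f$ and not on the measure, I can replace the limit above by the infimum over $n \geq 1$, thereby writing $h_m$ as the pointwise infimum of a sequence of continuous functions on $\mathcal U$. This immediately yields the upper semicontinuity of $h_m$.

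The second assertion is then the classical fact that a real-valued upper semicontinuous function on a Baire space is continuous on a residual subset; since $\mathcal U$ is Baire by hypothesis, this produces the desired residual set $\mathcal R$. I do not expect any substantive obstacle: the essential input, namely that $m$ is not perturbed when $f$ varies, trivializes the sole nontrivial hypothesis of Theorem~\ref{thm:A}, and the remaining work is exactly the sub-additivity argument already carried out there.
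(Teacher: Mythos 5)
Your proposal is correct and follows exactly the route the paper intends: the corollary is a direct application of Theorem~\ref{thm:A} (together with the remark that uniqueness is not needed, only continuity of the family of Sinai measures), and here the family $\mu_f \equiv m$ is constant, hence trivially continuous, so the sub-additivity argument and the Baire-space conclusion apply verbatim. No gap.
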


\subsection{Continuity}

In the $C^{1+\alpha}$ uniformly hyperbolic setting, Sinai probability measures are equilibrium states of an H\"older continuous potential, namely
$$ x  \quad \mapsto \quad \varphi^u(x)= -\log | \det Df(x) \mid_{E^u_x} |,$$
and so the metric entropy of such Gibbs measures is given by
$$h_{\mu_f}(f)= \int \, - \varphi^u(x)\, d\mu_f.$$
In general, a similar formula also holds in $\text{Diff}^{1+\alpha}(M)$, that is, $\mu_f$ has an adequate Jacobian as well. This is the key to prove the following property.

\begin{maintheorem}\label{thm:B}
Given $\alpha > 0$, assume that  $\mathcal{U}$ is a Baire subset of $\text{Diff}^{\,\, r}(M)$, $r\ge 1+\alpha$, such that every $f\in \mathcal U$ has a $Df$-invariant dominated splitting $TM=E_f \oplus F_f$
and a Sinai probability measure $\mu_f$ which has exactly $\dim F_f$ non-negative Lyapunov exponents.
Then:
\begin{itemize}
\item[(a)] If the Sinai probability measure is unique and the entropy map
\begin{equation}\label{eq:entropy}
h_{S}: f \in \mathcal U \quad \to \quad h_{\mu_f}(f)
\end{equation}
is upper semicontinuous, then it is continuous.
\item[(b)] If the Sinai probability measures vary continuously with the dynamics, then $h_{S}$ is continuous.
\end{itemize}
\end{maintheorem}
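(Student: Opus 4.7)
The plan is to convert the entropy of $\mu_f$ into the integral, against $\mu_f$, of a continuous, $f$-varying potential, and then to exploit continuity of that integral functional. Set $\psi_f(x) := \log|\det Df(x)|_{F_f(x)}|$. Since the dominated splitting $E_f \oplus F_f$ varies continuously in the $C^0$-topology as $f$ varies in the $C^1$-topology (see \cite{BDV05}) and $r \geq 1+\alpha$, the assignment $f \mapsto \psi_f$ is continuous into $C^0(M)$. The hypothesis that $\mu_f$ has exactly $\dim F_f$ non-negative Lyapunov exponents, combined with the domination (every Lyapunov exponent along $E_f$ is strictly smaller than every one along $F_f$), forces the non-negative Oseledets subspaces of $\mu_f$ to be exactly those contained in $F_f$. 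Applying the Ledrappier--Young version of Pesin's entropy formula to the $C^{1+\alpha}$ Sinai measure $\mu_f$ then yields the key identity
\[
h_{\mu_f}(f) \;=\; \sum_{\lambda_i(\mu_f, f)>0}\lambda_i(\mu_f, f) \;=\; \int \psi_f\, d\mu_f.
\]

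Part (b) follows routinely from this identity: given $f_n \to f$ in $C^r$ with $\mu_{f_n} \to \mu_f$ weakly-$*$, the uniform convergence $\psi_{f_n} \to \psi_f$ paired with weak-$*$ convergence of the measures gives $h_{\mu_{f_n}}(f_n) = \int \psi_{f_n}\,d\mu_{f_n} \to \int \psi_f\,d\mu_f = h_{\mu_f}(f)$, which is the desired continuity.

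For part (a) the upper semicontinuity of $h_S$ is assumed, so it suffices to establish lower semicontinuity. Given $f_n \to f$, I would extract by weak-$*$ compactness a subsequence along which $\mu_{f_{n_k}} \to \nu$; the same identity then gives $\lim_k h_{\mu_{f_{n_k}}}(f_{n_k}) = \int \psi_f\,d\nu$. The aim is to identify $\nu$ as a Sinai measure for $f$: uniqueness then forces $\nu = \mu_f$ along every such subsequence, hence $\mu_{f_n} \to \mu_f$ weakly-$*$, and part (a) reduces to part (b). I would identify $\nu$ by a sandwich. Ruelle's inequality gives $h_\nu(f) \leq \sum_i \lambda_i^+(\nu, f)$; transporting the exponent-count hypothesis to the limit $\nu$ via the rigidity of the dominated splitting gives $\sum_i\lambda_i^+(\nu, f) = \int\psi_f\,d\nu$; and joint upper semicontinuity of the entropy functional $(g,\mu)\mapsto h_\mu(g)$ in the $C^{1+\alpha}$ partially-dominated setting should yield $h_\nu(f) \geq \lim_k h_{\mu_{f_{n_k}}}(f_{n_k}) = \int\psi_f\,d\nu$. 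Equality throughout then forces Pesin's formula for $\nu$, whence $\nu$ is Sinai.

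The hard step will be this last identification in part (a). Weak-$*$ limits of Sinai measures need not be Sinai in general, so the argument must use the uniform structure of the dominated splitting both to transport the correct Lyapunov-exponent signature to $\nu$ (all non-negative exponents along $F_f$, all negative along $E_f$) and to obtain enough joint semicontinuity of entropy in $(g,\mu)$ to pin $h_\nu(f)$ between $\int\psi_f\,d\nu$ and $\sum_i\lambda_i^+(\nu,f)$. Only under that signature do the two bounds coincide, yielding Pesin's formula and hence the Sinai property of the limit.
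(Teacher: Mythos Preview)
Your plan matches the paper's proof almost exactly: both reduce the entropy to the integral of the Jacobian along $F_f$, handle part~(b) by joint continuity of $(f,\mu)\mapsto\int\psi_f\,d\mu$, and for part~(a) pass to a weak*-limit $\nu$, sandwich $h_\nu(f)$ between $\int\psi_f\,d\nu$ and Ruelle's bound, and invoke uniqueness. The two steps you flag as hard are handled more directly in the paper: the identity $\int\sum_{\lambda_i>0}\lambda_i\,d\nu=\int\psi_f\,d\nu$ for the \emph{limit} measure is obtained by citing Proposition~2.5 of Ledrappier--Strelcyn (stated there for any invariant probability measure, so no ``transport of the exponent signature'' is argued separately); and the inequality $\limsup_n h_{\mu_{f_n}}(f_n)\le h_\nu(f)$ is taken as the content of the upper-semicontinuity \emph{hypothesis} itself (with Newhouse--Yomdin cited for the $C^\infty$ case), rather than something to be derived from partial domination. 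So you need not prove joint upper semicontinuity of $(g,\mu)\mapsto h_\mu(g)$ from scratch---it is what the assumption in~(a) is meant to supply.
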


\begin{proof}[Proof of Theorem~\ref{thm:B}]
The proof of this theorem makes use of the regularity of the Jacobian along the $F$ sub-bundle as we now describe. As a dominated splitting and the dimensions of the corresponding subspaces vary continuously with the base point \cite{BDV05}, the map $x \in M \mapsto F_{f,x}$
is continuous and so the Jacobian
$$x \in M \mapsto Jf_{F_{f,x}}(x) := |\det Df(x)\mid_{F_{f,x}}|$$
is continuous as well. Then, from the proof of Proposition~2.5 in \cite{LS83}, we get
\begin{equation}\label{eq:LedS}
\int \big( \sum_{\lambda_i(x) \ge 0} \lambda_i(x) \Big)\, d\mu_f
	= \int \log Jf_{F_{f,x}}(x) \, d\mu_f.
\end{equation}
In particular, the continuity of the entropy function in \eqref{eq:entropy} is now a direct consequence of the continuity of the previous expression with the dynamical system $f$. Thus, we are left to prove this last assertion.

Firstly, for each $f\in \mathcal U$, let $TM=E_f \oplus F_g$ be the corresponding dominated splitting. Our assumptions ensure that the map
\begin{equation}\label{eq:teoB.cont}
 (f,x) \in \mathcal U \times M  \quad \mapsto \quad Jf_{F_{f,x}}(s):= |\det Df(x) \mid_{F_{f,x}}|
\end{equation}
is continuous. \\

\noindent(a) Assume that the entropy map is upper semicontinuous (a property valid if $\mathcal{U} \subset \text{Diff}^{\,\, \infty}(M)$; see \cite{N89, Ym87}) and that the Sinai probability measure is unique for each $f\in \mathcal U$. Then:

\begin{lemma} The map
$$f \in \mathcal U \quad \mapsto \quad  \mu_f$$
is continuous in the weak* topology.
\end{lemma}

\begin{proof}
Consider a sequence $(f_n)_{n \in \mathbb{N}}$ in $\mathcal U$ converging to $f \in \mathcal U$ and (taking a subsequence if necessary) such that $(\mu_{f_n})_{n \in \mathbb{N}}$ converges in the weak* topology to $\nu$. Therefore:\\

\noindent (i) Given the property (\ref{eq:teoB.cont}) and the fact that $\nu$ is the weak* limit of $\mu_{f_n}$, we have
$$\lim_{n \to +\infty}\,\int \log Jf_{{n}_{F_{f_n,x}}}(x) \, d\mu_{f_n} = \int \log Jf_{F_{f,x}}(x) \, d\nu.$$

\noindent (2i) As $\mu_{f_n}$ is a Sinai probability measure for each $n$,
$$\limsup_{n \to +\infty}\,\, h_{\mu_{f_n}}(g_n) = \lim_{n \to +\infty}\,\, \int \log Jf_{{n}_{F_{f_n,x}}}(x) \, d\mu_{f_n}.$$

\noindent (3i) By assumption, we know that $\limsup_{n \to +\infty}\,\, h_{\mu_{f_n}}(f_n) \leq h_\nu(f)$. Hence,
$$\int \log Jf_{F_{f,x}}(x) \, d\nu \leq h_\nu(f).$$

\noindent (4i) From from Proposition~2.5 in \cite{LS83}, which is valid for any invariant probability measure, we get
$$\int \big( \sum_{\lambda_i(x) >0} \lambda_i(x) \Big)\, d\nu = \int \log Jf_{F_{f,x}}(x) \, d\nu,$$
so
$$\int \big( \sum_{\lambda_i(x) >0} \lambda_i(x) \Big)\, d\nu \leq h_\nu(f).$$

\noindent (5i) Finally, by Ruelle's inequality \cite{Ru78},
$$h_\nu(f) \leq \int \big( \sum_{\lambda_i(x) >0} \lambda_i(x) \Big)\, d\nu$$
and so $\nu$ is a Sinai probability measure. As $f$ has a unique Sinai probability measure, $\nu$ must be $\mu_f$.
\end{proof}

Consequently, the right hand side of \eqref{eq:LedS} varies continuously with the dynamics. Thus,
$$f \in \mathcal U  \quad \mapsto \quad  h_{\mu_f}(f)= \int \log Jf_{F_{f,x}}(x) \, d\mu_f$$
is a continuous function, and this ends the proof of Theorem~\ref{thm:B}~(a).\\

\noindent (b) By assumption, the Sinai probability measures vary continuously with the dynamics in $\mathcal U$, thus, by (\ref{eq:teoB.cont}), the function
$$f \in \mathcal U  \quad \mapsto \quad  \int \log Jf_{F_{f,x}}(x) \, d\mu_f$$
is continuous. Moreover, for any $f \in \mathcal{U}$,
$$ h_{\mu_f}(f)= \int \log Jf_{F_{f,x}}(x) \, d\mu_f$$
so the entropy varies continuously with the dynamics.
\end{proof}

\begin{remark}\label{hyp.times}
\emph{The previous theorem can be strengthened in several ways. Firstly, the full assumption on domination is not essential for the proof of Theorem B; we just need a $Df$-invariant continuous sub-bundle $F_f \subset TM$ so that the Jacobian $x \to \log Jf_{F_{f,x}}$ is continuous and varies continuously with $f$. In Subsection~\ref{het.bif} we will illustrate that the latter regularity may hold for diffeomorphisms with no dominated splitting, although the set of points with lack of domination must be a meager set. Secondly, the assumption on the continuity of the Sinai measures can be replaced by the uniqueness plus the assumption of the existence of a uniform $\delta> 0$ such that every partition $\mathcal{P}$ with diameter at most $\delta$ is generating for every $\mu_f$ with $f \in \mathcal{U}$. This last condition is valid, for instance, if each $\mu_f$ has infinitely many $(\sigma_{\mu_f}, \delta)$-hyperbolic times almost everywhere, for some $\sigma_{\mu_f}> 1$ and uniform $\delta>0$. Following \cite{V07}, this condition implies the upper semicontinuity of the measure theoretical entropy, and ultimately the
continuity of the (unique) Sinai measure of the dynamical system.}
\end{remark}

\begin{remark} \emph{One says that a $f$-invariant compact set $\Lambda \subseteq M$ is a \emph{robustly transitive attractor} if there exist a neighborhood $U$ of $\Lambda$ and a $C^1$ neighborhood $\mathcal N$ of $f$ such that
\begin{itemize}
\item $\Lambda$ is the maximal invariant set of $f$ in $U$;
\item for every $g \in \mathcal N$, the maximal invariant set $\Lambda_g=\bigcap_{n\in\mathbb{N}}\, g^n(U)$ of $g$ in $U$ contains dense orbits.
\end{itemize}
The set $\Lambda$ is said to be a \emph{partially hyperbolic attractor} if there exists a dominated splitting $T_\Lambda M= E^s \oplus E^{cu}$ and constants $C>0$ and
$\varrho \in (0,1)$ such that
\begin{itemize}
\item  $\|Df^n(x)\mid_{E_x^s}\| \le C\varrho^n$ (uniform contraction along $E^s$);
\item $E^{cu}$ is non-uniformly expanding, that is,
	$$	\limsup_{n\to\infty} \frac1n \sum_{j=0}^{n-1} \log \| (Df(f^j(x)) \mid_{F_{f^j(x)}})^{-1} \| < 0. $$
\end{itemize}
We note that the previous theorem is valid for invariant compact sets $\Lambda$ which are robustly transitive and partially hyperbolic attractors.}
\end{remark}

\subsection{Regularity of the Lyapunov spectrum}
Given a $C^{1+\alpha}$ diffeomorphism $f$, an open $\mathcal{W} \subseteq M$ and a transitive hyperbolic attractor $\Lambda_f:=\bigcap_{n\ge 0} f^n(\mathcal W)$ of $f$, the entropy of the natural Sinai probability measure $\mu_f$ for $f_{\mid_{\Lambda_f}}$ is well known to exist and vary continuously with $f$.
Consequently, we also have regularity of the Lyapunov exponents in this context.

\begin{maintheorem}\label{thm:continuityexp}
Let $\mathcal U \subset \text{Diff}^{\,\, r}(M)$, where $r\ge 1+\alpha$ and $\alpha>0$, be an open set of
transitive Anosov diffeomorphisms. There exists a residual subset $\mathcal R \subset \mathcal U$ such that every $f\in \mathcal R$
is a continuity point of the Lyapunov exponent function
$$f \in \mathcal U  \mapsto \mathcal{L}_i(f):= \inf_{n\ge 1} \frac1n \int \log \| Df^n(x)^{\wedge i}\| \, d\mu_f$$
for every $1\le i \le \dim M$.
\end{maintheorem}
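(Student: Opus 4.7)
The strategy is to mimic the scheme used in the proof of Theorem~\ref{thm:A}: I will write each $\mathcal{L}_i$ as the infimum of an explicit countable family of continuous real-valued functions on $\mathcal{U}$, conclude that $\mathcal{L}_i$ is upper semicontinuous, and then apply the standard fact that an upper semicontinuous function on a Baire space has a residual set of points of continuity.

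Fix $1\le i\le \dim M$ and, for each $n\in \mathbb{N}$, set
$$\Phi_n^i(f) := \frac{1}{n}\int \log \|Df^n(x)^{\wedge i}\|\,d\mu_f.$$
The first step is to check that $\Phi_n^i$ is continuous on $\mathcal{U}$. Joint continuity of the map $(f,x)\in \mathcal{U}\times M\mapsto \log \|Df^n(x)^{\wedge i}\|$ is immediate: every $f\in \mathcal{U}$ being Anosov, $Df^n(x)$ is invertible everywhere, so $\|Df^n(x)^{\wedge i}\|$ stays bounded away from $0$ locally uniformly in $(f,x)$, while $f\mapsto Df^n$ depends continuously on $f\in \text{Diff}^{\,r}(M)$. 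Moreover, for a transitive $C^{1+\alpha}$ Anosov diffeomorphism the Sinai measure $\mu_f$ is the unique equilibrium state of the H\"older potential $x\mapsto -\log |\det Df(x)|_{E^u_x}|$ (Subsection~\ref{Sinai}); because the bundle $E^u_f$ and the derivative $Df$ vary continuously with $f\in \mathcal{U}$, so does this potential, and by classical Bowen-type uniqueness of equilibria $f\mapsto \mu_f$ is weak* continuous on $\mathcal{U}$. Combining these two facts yields the desired continuity of $\Phi_n^i$.

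The next step is to realise $\mathcal{L}_i$ as the infimum of the $\Phi_n^i$. Subadditivity of $n\mapsto \log \|Df^n(x)^{\wedge i}\|$ is immediate from the functoriality of the $i$-th exterior power and the submultiplicativity of the operator norm:
$$\|Df^{n+m}(x)^{\wedge i}\|\le \|Df^n(x)^{\wedge i}\|\cdot \|Df^m(f^n(x))^{\wedge i}\|.$$
Integrating against the $f$-invariant measure $\mu_f$ and applying Fekete's lemma gives
$$\mathcal{L}_i(f) = \inf_{n\ge 1}\Phi_n^i(f) = \lim_{n\to\infty}\Phi_n^i(f),$$
so $\mathcal{L}_i$ is the pointwise infimum of a countable family of continuous functions, hence upper semicontinuous on the Baire set $\mathcal{U}$. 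A standard topological argument then produces a $C^r$-residual subset $\mathcal{R}_i\subset \mathcal{U}$ of continuity points of $\mathcal{L}_i$, and the theorem follows by taking $\mathcal{R} := \bigcap_{i=1}^{\dim M}\mathcal{R}_i$, which is residual as a finite intersection of residual sets.

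The principal technical input in this plan is the weak* continuity of $f\mapsto \mu_f$; outside the uniformly hyperbolic regime this is a delicate issue, as Theorems~\ref{thm:A} and \ref{thm:B} explicitly take it as an assumption. In the transitive Anosov setting, however, it is classical, so the argument reduces to a clean combination of subadditivity and Baire category.
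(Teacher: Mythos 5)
Your proposal is correct and follows essentially the same route as the paper: sub-additivity of $n\mapsto\log\|Df^n(x)^{\wedge i}\|$ plus the weak* continuity of $f\mapsto\mu_f$ (which, for transitive $C^{1+\alpha}$ Anosov diffeomorphisms, the paper also derives from the equilibrium-state characterization of the Sinai measure) make each $\mathcal{L}_i$ an infimum of continuous functions, hence upper semicontinuous, and one concludes by intersecting the $\dim M$ residual sets of continuity points. You merely spell out the continuity of $\Phi_n^i$ and the Fekete step in more detail than the paper does.
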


\begin{proof}
Since the Sinai probability measure is unique for every $f\in \mathcal U$ and varies continuously with $f$, it follows from the sub-additivity of the sequence of functions $(\log \| Df^n(x)^{\wedge i}\|)_{n \in \mathbb{N}}$ that the function $\mathcal U \ni f \mapsto \mathcal{L}_i(f)$ is upper semicontinuous, for any $1\le i \le \dim M$. Thus, there exists a residual subset $\mathcal R_i \subset \mathcal U$ of continuity points for $\mathcal{L}_i$. We are left to intersect these residual subsets, obtaining $\mathcal R:= \bigcap_{1\le i \le \dim M}\mathcal R_i$.
\end{proof}

\subsection{Flows}

For volume preserving flows, the $C^1$-generic continuity of the metric entropy function has been obtained in the three-dimensional setting in \cite{BV11} by proving the $C^1$-generic continuity of each Lyapunov exponent and the $C^1$-generic validity of Pesin's entropy formula; it remains an open question in the higher dimensional setting.

Given a vector field $X\in \mathfrak{X}^r(M)$ $(r\ge 1$) on a compact Riemannian manifold $M$, it generates a $C^r$-smooth flow $(\varphi_X^t)_{t\in \mathbb R}$. Moreover, for any invariant probability measure its Lyapunov exponents with respect to the flow $(\varphi_X^t)_{t\in \mathbb R}$ coincide with the ones for the time-one $C^r$-diffeomorphism $\varphi_X^1$. The measure theoretical entropy of the flow is also defined as the entropy of $\varphi_X^1$. Therefore, the extension of Theorem~\ref{thm:A} for flows is straightforward.

Clearly, if $TM=E\oplus F$ is a dominated splitting for the flow $(\varphi_X^t)_{t\in \mathbb R}$, then it is also a dominated splitting for the time-one map $\varphi_X^1$. Moreover, since dominated splittings vary
continuously with the base point, the one-dimensional $D\varphi_X^1$-invariant subspace generated by the vector field $X$ is contained in one of the subbundles $E$ or $F$ and gives rise to a zero Lyapunov exponent. Hence, the existence of a dominated splitting for the flow yields a dominated splitting for $\varphi_X^1$ and gives rise to a natural counterpart of Theorem~\ref{thm:B} for flows. 
As an illustration let us observe that geometric Lorenz attractors $\Lambda$ for  
three-dimensional manifolds admit a unique Sinai measure (see~\cite{APPV}) which varies continuously 
with the vector field (see~\cite{AlS}). Since there exists an invariant splitting $T_\Lambda M=E^s\oplus F$ on the
attractor $\Lambda$, which varies continuously with the vector field and there are $\dim F=2$ non-negative
Lyapunov exponents for the flow then the counterpart of Theorem~\ref{thm:B} in this setting yields that the entropy of the Sinai measure varies continuously with the vector field.

\section{Examples}\label{examples}

\subsection{Diffeomorphisms derived from Anosov}
A particularly well studied family of partially hyperbolic diffeomorphisms is the family of $C^{1+\alpha}$ derived from Anosov diffeomorphisms; see \cite{M78, Ca93, Cas04}. These systems are not uniformly hyperbolic neither structurally stable but they are robustly transitive and intrinsically stable (see \cite{BuFSV12} for details; their topological entropy is in fact constant in a neighborhood and they each have a unique measure of maximal entropy with respect to which periodic orbits are equidistributed). Moreover each has a unique SRB probability measure which is an equilibrium state of an H\"older potential and absolutely continuous with respect to Lebesgue along the unstable Pesin sub-manifolds. Therefore, we may apply either Theorem~\ref{thm:B}~(a) or Theorem~\ref{thm:B}~(b) according to the arcs considered, thus deducing that the metric entropy of the Sinai probability measure varies continuously within these dynamics.

\subsection{Heteroclinic bifurcations of Anosov diffeomorphisms}\label{het.bif}
In \cite{E98}, another family of maps at the boundary of the set of the Anosov diffeomorphisms has been studied from an ergodic viewpoint. Given a $C^2$ transitive Anosov diffeomorphism $f_0:M \rightarrow M$, the new dynamics $f_1$ is obtained by isotopy from $f_0$ as a first bifurcation through a cubic tangency between the stable and unstable manifolds of two distinct periodic points of $f_0$. The author proves that $f_1$ is conjugate to $f_0$, so it is transitive as well, and has a unique Sinai probability measure $\mu_1$ with respect to which $f_1$ is Bernoulli and has a Pesin region with full measure. The key property of these bifurcations is that the invariant stable and unstable foliations persist in $M$, although at the tangency point they are no longer transversal. Moreover, if the dimension of $M$ is two and $f_0$ is conservative, then the isotopy may be taken in the space of conservative diffeomorphisms and $\mu_1$ is the Lebesgue measure. These family does not satisfy the requirements of Theorem~\ref{thm:B} due to the point of heteroclinic tangency. Nevertheless, the unstable sub-bundle extends continuously to that point and so Theorem~\ref{thm:B}~(b) applies to this setting.

\subsection{Mostly expanding or mostly contracting diffeomorphisms}
For $C^1$-perturbations of a diffeomorphism that admit a unique Sinai probability measure continuously parametrized by a Baire space, Theorem~\ref{thm:A} ensures that $C^1$-generically the measure theoretical entropy of the Sinai probability measure varies continuously. For instance, we may consider the $C^1$-perturbations of mostly expanding or mostly contracting  partially hyperbolic diffeomorphisms, introduced in \cite{ABV00} and \cite{BV00}, respectively. Moreover, we note that, under the assumption of robust transitivity, it was proved in \cite{ABV00, BV00} that, in this context, there exists a unique SRB probability measure which is absolutely continuous with respect to Lebesgue measure along unstable Pesin manifolds, it is a $u$-Gibbs measure \cite{PeS83} and its statistical stability was obtained in \cite{V07}. Consequently,

\begin{maincorollary}
The entropy of the SRB measure for mostly contracting or mostly expanding diffeomorphisms varies continuously with the dynamical system.
\end{maincorollary}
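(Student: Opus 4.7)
The plan is to reduce the statement to a direct application of Theorem~\ref{thm:B}~(b) by verifying its three hypotheses (existence of a dominated splitting, of a Sinai probability measure with the right number of non-negative Lyapunov exponents, and continuous dependence of that measure on the dynamics) in each of the two families under consideration.

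First, I would recall from \cite{ABV00, BV00} that every mostly contracting (respectively, mostly expanding) partially hyperbolic diffeomorphism $f$ carries a dominated splitting $TM=E_f^{cs}\oplus E_f^{u}$ with $E_f^{u}$ uniformly expanding (respectively, $TM=E_f^{s}\oplus E_f^{cu}$ with $E_f^{s}$ uniformly contracting), and admits a unique SRB probability measure $\mu_f$ that is absolutely continuous with respect to Lebesgue along the unstable Pesin sub-manifolds; hence $\mu_f$ is a Sinai probability measure in the sense of Section~\ref{Sinai}. This also provides the decomposition $TM=E_f\oplus F_f$ required by Theorem~\ref{thm:B}, with $F_f:=E_f^{u}$ in the mostly contracting case and $F_f:=E_f^{cu}$ in the mostly expanding case; in both situations the partial hyperbolicity of $f$ persists on a $C^1$-neighborhood, and the splitting (together with the dimensions of its factors) varies continuously with the dynamics, as recalled in \cite{BDV05}.

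Next I would check the Lyapunov-exponent count. In the mostly contracting case, $F_f=E_f^u$ is uniformly expanding, so all $\dim F_f$ Lyapunov exponents along $F_f$ are positive, while by the very definition of ``mostly contracting'' all Lyapunov exponents of $\mu_f$ along $E_f=E_f^{cs}$ are strictly negative; hence $\mu_f$ has exactly $\dim F_f$ non-negative Lyapunov exponents. In the mostly expanding case, $E_f=E_f^s$ is uniformly contracting, and the defining non-uniform expansion along $E_f^{cu}$ forces every Lyapunov exponent of $\mu_f$ along $F_f=E_f^{cu}$ to be positive, again leaving exactly $\dim F_f$ non-negative exponents. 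The continuous dependence $f\mapsto\mu_f$ in the weak$^*$ topology is then supplied by the statistical stability proved in \cite{V07} (using the robust transitivity hypothesis implicit in this framework).

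With these ingredients verified, hypothesis (b) of Theorem~\ref{thm:B} holds in its entirety, and the Corollary follows. The only genuinely delicate point is the matching $\#\{\lambda_i\geq 0\}=\dim F_f$: it hinges on the fact that the ``mostly'' qualifier in both settings is precisely designed to rule out a zero (or wrong-sign) Lyapunov exponent on the central part of the splitting, something guaranteed by the $u$-Gibbs property and the unique-SRB arguments of \cite{ABV00, BV00}; everything else is a bookkeeping check that the hypotheses of Theorem~\ref{thm:B}~(b) are in force.
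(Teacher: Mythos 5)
Your proposal is correct and follows essentially the route the paper intends: the paper states this corollary without a separate proof, presenting it as an immediate consequence of the facts recalled just before it (partial hyperbolicity, hence a dominated splitting; a unique SRB measure absolutely continuous along unstable Pesin manifolds, hence a Sinai measure; and statistical stability from \cite{V07}) combined with Theorem~\ref{thm:B}. You simply make explicit the verification of the hypotheses of Theorem~\ref{thm:B}~(b), including the count of non-negative Lyapunov exponents, which is the right reading since plain continuity (rather than only generic continuity via Theorem~\ref{thm:A}) is what is being claimed.
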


\subsection{Maps with singularities and discontinuities}
A robust class $\mathcal{U}$ of multidimensional non-uniformly expanding $C^2$ endomorphisms, now called Viana maps, were introduced in \cite{Vi97} as perturbations of skew products of fiber quadratic maps. Some extensions have been studied more recently in \cite{AS13,Va14} and references therein. These are transitive maps with a zero Lebesgue meassure singular set $S_f$ and a unique Sinai probability measure $\mu_f$ which varies continuously in the weak* topology (and even in the $L^1$ topology; details in \cite{AV02}). Using Markov structures, that exist and vary continuously with the dynamics in $\mathcal{U}$, it was proved in \cite{AOT06} that the entropy of the Sinai probability measure also varies continuously with the endomorphism. What information may we get in this setting from Theorem~\ref{thm:A}?

Two of the demands made in Remark~\ref{rmk:uniformity}, needed to apply Theorem~\ref{thm:A}, pose no problem. Indeed:
\begin{itemize}
\item As $\mu_f$ has positive Lyapunov exponents in every direction and almost every point, by \cite{L98, QZ02} the Pesin's entropy formula is valid, that is,
$$h_{\mu_f}(f) = \int \log |\det Df| \, d\mu_f.$$
So, instead of condition (LS3) of Remark~\ref{rmk:uniformity}, we are left to verify that, in spite of the singular region, the map
$$f \mapsto \log |\det Df|$$
is continuous. This demand is fulfilled within well chosen subfamilies of $\mathcal{U}$.
\item In $\mathcal{U}$, the Lyapunov exponents are uniformly bounded from above and below, so there exists a uniform constant $A>0$ such that, for any $f \in \mathcal{U}$, we have
$$(\mathcal{D}) \quad \quad |\int \log |\det Df| \, d\mu_f| \le A.$$
This means, in particular, that condition (LS2) of Remark~\ref{rmk:uniformity} holds.
\end{itemize}

\medskip

However, condition (LS1) of Remark~\ref{rmk:uniformity}, which is linked to a slow rate of approximation of the orbits to the singular set, is not guaranteed for Viana maps.

Yet, Viana maps satisfy another property which is enough to prove directly the continuity of the entropy with the dynamics. Namely, it is possible to choose adequately the skew product of the fiber quadratic maps, of whose perturbations is made $\mathcal{U}$, in order to produce a parameterized $C^3$ small enough arc
$$(f_t)_{t\,\in\, [-\rho_0,\,\rho_0]}$$
of $C^2$ endomorphisms in $\mathcal{U}$, exhibiting singular sets
$$\mathcal S_{f_t}=\mathcal S$$
with $\mu_{f_t}(\mathcal S)=0$ and such that there exist constants $c>0$ and $\beta > 0$ satisfying, for every $t,s \in [-\rho_0,\,\rho_0]$ and all $x \in M\setminus \mathcal S$,
$$(\mathcal{E})\quad \quad |\,\log |\det Df_t(x)| - \log |\det Df_s(x)|\,|\,  \le \,c \,|t-s|^\beta.$$
Taking into account that $\mu_{f_t}$ varies continuously in the $L^1$ topology with $t$, that the measure $\mu_{f_t}$ is regular and that $\mu_{f_t}(S)=0$ for any $t$, given $\epsilon>0$ there is an open neighborhood of $S$ in $M$, say a ball $B_{\delta}(S)$ for some $\delta=\delta(\epsilon) >0$ independent of $t \in [-\rho_1,\,\rho_1] \subset [-\rho_0,\,\rho_0]$, such that, for every $t$,
$$\mu_{f_t}(B_{\delta}(S))< \epsilon/2.$$
By condition $(\mathcal{D})$, the map $\log |\det Df_t  | \in L^1(\mu_{f_t})$ and, consequently, from property $(\mathcal{E})$ we get
\begin{align*}
|\,\int_{B_\delta(\mathcal S)} \log |\det Df_t  | \;d\mu_{f_t}\,|
	\lesssim \,c \,|t|^\beta + \int_{B_\delta(\mathcal S)} \log |\det Df_0  | \;d\mu_{f_t}.
\end{align*}
Additionally, for any fixed $t$, the Dominated Convergence Theorem yields
$$\lim_{\delta\to 0} \int_{B_\delta(\mathcal S)} \log |\det Df_0  | \;d\mu_{f_t} = 0.$$
This proves that
$$ \limsup_{\delta \to 0}\,\, \limsup_{t\to 0} \int_{B_\delta(\mathcal S)} \log |\det Df_t  | \;d\mu_{f_t} = 0.$$
Hence, as
\begin{align*}
h_{\mu_{f_t}}(f_t)
	& = \int_{B_\delta(\mathcal S)} \log |\det Df_t | \;d\mu_{f_t}
		+ \int_{M\setminus B_\delta(\mathcal S)} \log |\det Df_t  | \;d\mu_{f_t}
\end{align*}
we finally conclude from the weak* convergence of the measures that
$$\lim_{\delta\to 0}\,\,\lim_{t\to 0}
	\int_{M\setminus B_\delta(\mathcal S)} \log |\det Df_t  | \; d\mu_{f_t}
	= \int_{M} \log |\det Df_0  | \;d\mu_{f_0}
	= h_{\mu_{f_0}}(f_0).
$$
Therefore,
$$\lim_{t\to 0}\,\,h_{\mu_t}(f_t) = h_{\mu_0}(f_0).$$

\begin{remark}\emph{Condition $(\mathcal{E})$ has been used, among other assumptions, in \cite{A07} to provide an alternative proof for the continuity (in the weak* topology) of the equilibrium states of the Viana maps. Property  $(\mathcal{E})$ also holds for the family of Benedicks-Carleson quadratic maps.}
\end{remark}

\subsection{Intermittency phenomenon for interval maps and diffeomorphisms}
We observe that the statement of Theorem~\ref{thm:B} does not require, \emph{a priori}, neither hyperbolicity of the Sinai probability measures nor their continuity with the dynamics. We can illustrate this detail by the so-called Manneville-Pomeau transformations \cite{LSV99} and the setting of almost Anosov diffeomorphisms \cite{HY95,Hu00}.

Given $\al >0$, let $f_\al:[0,1]\to [0,1]$ be defined by
\begin{equation*}
f_\al(x)= \left\{
\begin{array}{cl}
x(1+2^{\alpha} x^{\alpha}) & \mbox{if}\; 0 \leq x \leq \frac{1}{2}  \\
2x-1 & \mbox{if}\; \frac{1}{2} < x \leq 1.
\end{array}
\right.
\end{equation*}
For $\alpha=0$ the map is expanding and (piecewise) $C^\infty$, while for $\alpha>0$ these maps have an indifferent fixed point at zero.
It is well known that $f_0$ preserves the Lebesgue measure $m$ and that, for every $\alpha \in (0,1)$, there exists a unique ergodic $f_\alpha$-invariant Sinai probability measure $\mu_\alpha \ll m$; see \cite{LSV99}. Lifting the dynamics to the circle, we obtain a diffeomorphism, so we may omit the discontinuity points and say that $f_\alpha \in C^{1+\alpha}$.

\begin{maincorollary}
The entropy map
$$ \alpha \in [0,1) \quad \mapsto \quad  h_{\mu_\alpha}(f_\alpha)$$
is continuous.
\end{maincorollary}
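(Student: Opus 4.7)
The plan is to apply Theorem~\ref{thm:B}~(b), in the extended form described in Remark~\ref{hyp.times}, to the family $(f_\alpha)_{\alpha \in [0,1)}$ viewed as $C^{1+\alpha}$ degree-two local diffeomorphisms of $S^1$. Since $\dim S^1 = 1$, the tangent bundle itself serves as the invariant sub-bundle $F_{f_\alpha}$, and $\mu_\alpha$ has exactly one Lyapunov exponent, which is non-negative because $f_\alpha$ is nowhere contracting. Three ingredients need to be assembled: continuity of the Jacobian, Pesin's entropy formula for $\mu_\alpha$, and weak* continuity of $\alpha \mapsto \mu_\alpha$.

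First I would record the derivative explicitly: $f'_\alpha(x) = 1 + 2^\alpha(1+\alpha)\, x^\alpha$ on $[0,1/2]$ and $f'_\alpha(x) = 2$ on $[1/2, 1]$. Using the convention $0^0 = 1$, this is jointly continuous on $S^1 \times [0,1)$ and uniformly bounded on $S^1 \times [0, 1-\delta]$ for each $\delta \in (0,1)$, so $\alpha \mapsto \log|f'_\alpha|$ is continuous in the supremum norm on compact sub-intervals of $[0,1)$. Next, since $\mu_\alpha \ll m$ with a single non-negative exponent, the one-dimensional Pesin--Ledrappier--Strelcyn identity (the scalar analogue of \eqref{eq:LedS} from the proof of Theorem~\ref{thm:B}) yields
\begin{equation*}
h_{\mu_\alpha}(f_\alpha) = \int \log |f'_\alpha| \, d\mu_\alpha.
\end{equation*}

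The last ingredient is the weak* continuity of $\alpha \mapsto \mu_\alpha$ on $[0,1)$, which is the classical statistical stability of the Manneville--Pomeau family \cite{LSV99}. Once this is in hand, a triangle inequality combining sup-norm convergence of $\log|f'_{\alpha_n}|$ with weak* convergence of $\mu_{\alpha_n}$ delivers convergence of the integrals, hence continuity of the entropy map. I expect the main obstacle to be the statistical stability at the endpoint $\alpha = 0$, where the density of $\mu_\alpha$ sheds its singularity at the indifferent fixed point; one must estimate how mass concentrates near zero as $\alpha \downarrow 0$, whereas continuity for $\alpha$ in compact subsets of $(0,1)$ is more routine.
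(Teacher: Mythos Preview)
Your approach coincides with the paper's: both reduce to the one-dimensional Pesin formula $h_{\mu_\alpha}(f_\alpha)=\int \log|f'_\alpha|\,d\mu_\alpha$ and then argue continuity of the right-hand side via continuity of the Jacobian together with weak* continuity of $\alpha\mapsto\mu_\alpha$. The paper splits into two regimes --- first $\alpha\in[\alpha_0,1)$ for fixed $\alpha_0>0$, handled by Theorem~\ref{thm:B} and Remark~\ref{rmk:uniformity}, and then the endpoint $\alpha=0$ separately, precisely because the regularity class $C^{1+\alpha}$ degenerates there --- whereas you attempt a single uniform argument. For the weak* continuity of $\mu_\alpha$ the paper invokes Remark~\ref{hyp.times} (uniqueness plus a common generating partition, here the Markov partition $\{[0,\tfrac12],(\tfrac12,1]\}$); note that \cite{LSV99} establishes the invariant density and correlation decay but does not state statistical stability as such, so the internal route via Remark~\ref{hyp.times} is the cleaner justification.

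There is, however, a real gap in your argument at $\alpha=0$ (and the paper's own sketch glosses over the same point). The claimed joint continuity of $(x,\alpha)\mapsto f'_\alpha(x)$ fails at $(0,0)$: for every $\alpha>0$ one has $f'_\alpha(0)=1$, while $f'_0\equiv 2$, so $\|\log f'_\alpha-\log f'_0\|_\infty\ge \log 2$ for all $\alpha>0$ and the sup-norm triangle inequality you propose cannot close the estimate at the endpoint. The repair is to exploit that the lack of uniformity is concentrated at the single point $x=0$ and that $0\le \log f'_\alpha\le C$ uniformly: on $[\delta,1]$ the convergence $\log f'_\alpha\to\log 2$ is uniform, and the contribution of $[0,\delta]$ is bounded by $C\,\mu_\alpha([0,\delta])$, which tends to $0$ uniformly as $\alpha\to 0$ because the densities $d\mu_\alpha/dm$ remain bounded near $0$ in that limit. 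This is exactly where a quantitative input --- either the density bounds from \cite{LSV99} or the hyperbolic-times mechanism behind Remark~\ref{hyp.times} --- is genuinely needed, confirming your intuition that the endpoint is the delicate case.
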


\begin{proof}
As a direct consequence of Theorem~\ref{thm:B} and Remark~\ref{rmk:uniformity} we deduce that, for any $\alpha_0>0$, the entropy map
$$\alpha \in [\alpha_0,1) \mapsto  h_{\mu_\alpha}(f_\alpha)$$
is continuous.

The continuity of the entropy map at $\alpha=0$ requires another argument since there is no $r>1$ such that $f_\alpha \in C^r$ for every $\alpha \in [0,1)$. Nevertheless, for any $\alpha \in [0,1)$
the map $f_\alpha$ is $C^{1+\alpha}$ and so, by the Pesin entropy formula,
$$h_{\mu_\alpha}(f_\alpha) = \int \log |f_\alpha'| \, d\mu_\alpha.$$
Moreover, the Jacobian map
$$\alpha \in [0,1) \mapsto \log |f_\alpha'| \in C^0([0,1])$$
is continuous. Finally, the continuity of the Sinai probability measure $(\mu_\alpha)_{\alpha \in [0,1)}$ follows from its uniqueness together with the existence of a generating partition for
the family $(f_\alpha)_{\alpha \,\in \,[0,1]}$, as explained in Remark~\ref{hyp.times}.
\end{proof}

Conditions for the existence of Sinai probability measures for diffeomorphisms that present an intermittency phenomenon, and for which our results apply, may be found in \cite{HY95, Hu00, H06}.
An argument entirely analogous to the previous ones yields the continuity of the entropy of the Sinai measure in this context as well.

\subsection{Conservative diffeomorphisms derived from the standard map}
In \cite{BC14} it has been introduced a $C^2$ open class $\mathcal U$ of volume-preserving non-uniformly hyperbolic diffeomorphisms, obtained by perturbing a skew product of standard maps driven by a parameter over an Anosov diffeomorphism. These systems are partially hyperbolic on $\mathbb T^2 \times \mathbb T^2$ with a two-dimensional central direction and belong to a neighborhood of $f_0 : \mathbb T^2 \times \mathbb T^2 \to \mathbb T^2 \times \mathbb T^2$ given by
$$ f_0(z,w)=(s(z) + \pi_1 \circ A^N(w), A^{2N}(w))$$
where $s: \mathbb T^2 \to \mathbb T^2$ denotes the standard map, $\pi_1$ stands for the projection of $\mathbb R^2$ onto the first coordinate, $A$ is a linear Anosov automorphism and $N\in \mathbb N$ is chosen large enough
(see \cite[Theorem~1.1]{BC14} for details).

To address the continuity of the metric entropy of the Lebesgue measure, which is a Sinai probability measure for this family of dynamics, and apply Theorem~\ref{thm:A}, we have to require uniqueness of the Sinai probability measure. By Pesin's ergodic decomposition theorem for non-uniformly hyperbolic measures invariant by $C^2$ volume-preserving diffeomorphisms (see \cite{Pe77}) the uniqueness is a direct consequence of the ergodicity of the Sinai probability measure. Moreover, as stated in \cite{BW10}, stable ergodicity for $C^2$ partially hyperbolic diffeomorphisms follows from accessibility and center bunching conditions; and stable ergodicity is a $C^1$ open and dense property in the space of partially hyperbolic diffeomoprhisms in $\text{Diff}^{\,\,r}(M)$ ($r\ge 1$).

Now, by \cite[Section~7]{BC14}, every diffeomorphism in $\mathcal U$ is partially hyperbolic and center bunched. In conclusion, it follows from the previously mentioned results that there is a $C^2$ open and dense set $\mathcal V \subset \mathcal U$ of (stable) ergodic diffeomorphisms. Additionally, $\mathcal V$ is a Baire space of $C^2$ diffeomorphisms. Thus, the following is an immediate consequence of Theorem~\ref{thm:A}:

\begin{maincorollary}
There is a $C^2$ residual subset $\mathcal R \subset \mathcal V$ of continuity points of the metric entropy function $f \mapsto h_m(f)$.
\end{maincorollary}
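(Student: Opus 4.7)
The plan is to apply Theorem~\ref{thm:A} to the Baire space $\mathcal V$ with $r=2$, taking as the family of Sinai probability measures the constant assignment $f \mapsto m$. Essentially all the ingredients have been assembled in the paragraphs preceding the corollary; what remains is to check the two hypotheses of Theorem~\ref{thm:A}, namely uniqueness of the Sinai measure for each $f\in\mathcal V$ and continuous dependence on $f$.

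For the Baire property, I would observe that $\mathcal V$ is $C^2$-open in the complete metric space $\text{Diff}^{\,2}_m(M)$, hence Baire. For uniqueness, I would chain the cited results: every $f\in\mathcal V$ is partially hyperbolic and center bunched by \cite[Section~7]{BC14}, and accessible; the Burns--Wilkinson criterion \cite{BW10} therefore yields stable ergodicity of $m$. In particular $m$ is ergodic, and since it is assumed to be a Sinai measure it is also non-uniformly hyperbolic. Pesin's ergodic decomposition theorem for $C^2$ volume-preserving non-uniformly hyperbolic systems \cite{Pe77} then forces $m$ to be the unique Sinai probability measure of $f$, since any other Sinai measure would, on each ergodic component, have to coincide with a normalization of $m$. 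Continuous dependence is automatic: the assignment $f\in\mathcal V \mapsto \mu_f = m$ is literally constant, hence trivially weak* continuous.

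With both hypotheses in place, Theorem~\ref{thm:A} delivers upper semicontinuity of $f\mapsto h_m(f)$ on $\mathcal V$, and the standard general-topology argument recalled in its proof (upper semicontinuous real-valued functions on a Baire space have a residual set of continuity points) produces the desired $C^2$-residual subset $\mathcal R\subset\mathcal V$. I do not foresee any genuine obstacle: the nontrivial dynamical content has already been supplied by \cite{BC14}, \cite{BW10} and \cite{Pe77}, so the role of the present corollary is simply to package those inputs into the hypotheses of Theorem~\ref{thm:A}. The only point worth double-checking is that stable ergodicity really does give a unique Sinai measure in this non-uniformly hyperbolic setting, for which the invocation of Pesin's ergodic decomposition is the cleanest route.
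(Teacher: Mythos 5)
Your proposal matches the paper's reasoning exactly: the paper also obtains uniqueness of the Sinai measure from stable ergodicity (via \cite{BW10} applied to the partially hyperbolic, center-bunched maps of \cite[Section~7]{BC14}) together with Pesin's ergodic decomposition \cite{Pe77}, notes that $\mathcal V$ is a Baire space of $C^2$ diffeomorphisms with the constant family $f\mapsto m$, and then invokes Theorem~\ref{thm:A}. No substantive difference.
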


\subsection{Conservative diffeomorphisms with a dominated splitting}
In \cite{Ta04} the author presents open sets $\mathcal U$ of $C^2$ volume-preserving diffeomorphisms on the torus $\mathbb T^4$ which are not partially hyperbolic but are stably ergodic. These diffeomorphisms have a dominated splitting $T \,\mathbb T^4 = E^{cs} \oplus E^{cu}$ for which the Lebesgue measure is hyperbolic, exhibiting two positive and two negative Lyapunov exponents. Besides, the entropy map of the Lebesgue measure is continuous at a residual subset of these maps \cite{YZ15}. Now, Theorem~\ref{thm:B} improves this conclusion:

\begin{maincorollary}
The entropy map $f \in \mathcal U \mapsto h_m(f)$ is continuous.
\end{maincorollary}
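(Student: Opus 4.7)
The plan is to deduce the corollary as a direct application of Theorem~\ref{thm:B}~(b), exploiting the trivial continuity that comes from the fact that every $f \in \mathcal U$ preserves the same reference measure $m$.

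First I would verify the structural hypotheses of Theorem~\ref{thm:B} with $E_f := E^{cs}_f$ and $F_f := E^{cu}_f$. Each $f \in \mathcal U$ is $C^2$, hence $C^{1+\alpha}$ for any $\alpha \in (0,1)$, and by construction in \cite{Ta04} admits the dominated splitting $T\mathbb T^4 = E^{cs}_f \oplus E^{cu}_f$ with $\dim F_f = 2$. Moreover, by \cite{Ta04}, $m$ is hyperbolic for $f$ with two positive and two negative Lyapunov exponents, so the number of non-negative Lyapunov exponents of $m$ equals $\dim F_f = 2$, matching the requirement in the statement of Theorem~\ref{thm:B}.

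Next I would justify that $m$ is a Sinai probability measure for every $f \in \mathcal U$ in the sense of Subsection~\ref{Sinai}. Since $f$ is $C^{1+\alpha}$ and preserves the smooth volume $m$, and $m$ is hyperbolic with the dominated splitting coinciding $m$-a.e.\ with a refinement of its Oseledets decomposition, Pesin's absolute continuity theorem yields that $m$ disintegrates into measures absolutely continuous with respect to Lebesgue along unstable Pesin sub-manifolds. Equivalently, $m$ satisfies Pesin's entropy formula with respect to $f$, so $m$ is a Sinai measure for $f$. This is the only step that is not pure bookkeeping, and I expect it to be the main (minor) obstacle; however, it is standard in the $C^{1+\alpha}$ volume-preserving hyperbolic context and requires no new ingredient beyond classical Pesin theory, despite the absence of partial hyperbolicity.

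Finally, since every $f \in \mathcal U$ preserves the same probability $m$, the assignment $f \in \mathcal U \mapsto \mu_f = m$ is literally a constant map, hence continuous in the weak* topology. All the hypotheses of Theorem~\ref{thm:B}~(b) are therefore in place, and the theorem delivers at once the continuity of the entropy map $f \in \mathcal U \mapsto h_m(f)$, which is what was to be proved.
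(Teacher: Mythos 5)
Your proposal is correct and follows essentially the same route as the paper, which states this corollary as an immediate application of Theorem~\ref{thm:B} to the Tahzibi class: the dominated splitting $E^{cs}\oplus E^{cu}$ with exactly $\dim E^{cu}=2$ non-negative exponents, the fact that $m$ is a Sinai measure by classical Pesin theory for $C^{2}$ volume-preserving hyperbolic systems, and the trivial continuity of the constant assignment $f\mapsto m$ feeding into part~(b). Your write-up actually spells out the verification of the hypotheses more explicitly than the paper does, but introduces no new idea.
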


\bibliographystyle{alpha}
\bibliography{bib}

\end{document}